\theoremstyle{plain}
\newtheorem{theorem}{Theorem}[section]
\newtheorem{prop}[theorem]{Prop.}
\newtheorem{lemma}[theorem]{Lemma}
\theoremstyle{definition}
\newtheorem{remark}[theorem]{Remark}
\newtheorem{example}[theorem]{Example}
\def\Slin{S_{\text{\sl lin}}} 
\def\Rlin{R_{\text{\sl lin}}} 
\def\Qlin{Q_{\text{\sl lin}}} 
\def\Dlin{D_{\text{\sl lin}}} 
\def\ddt{{\displaystyle {d\over dt}\Big|_{t=0}}}
\def\Ddt{{\displaystyle {D\over dt}\Big|_{t=0}}}
\def\id{\mathord{\rm id}}
\def\Ad{\mathop{\rm Ad}\nolimits}
\def\inv{^{-1}}
 \def\supp{\mathop{\rm supp}\nolimits}
\def\data{X}
 \def\const{\mathord{\rm const}}
\def\dist{\mathop{\rm dist}\nolimits}
\def\Lie#1{\mathord{\mathfrak #1}}
 \def\tr{\mathop{\rm tr}\nolimits}
\def\gplus{\mathbin{\oplus}}
\def\gminus{\mathbin{\ominus}}
\def\gstar{\mathbin{\setbox0=\hbox{\footnotesize$\bigcirc$}\raise-0.5pt
	\hbox to\wd0{\hfill$*$\hfill}\hskip-\wd0\box0}}
\long\def\nix#1{\relax}
\def\lessim{\lesssim}
\def\wt{\widetilde}
\def\<{\langle}
\def\>{\rangle}
\def\eps{\varepsilon}
 \def\Pt{\mathop{\rm Pt}\nolimits}
\def\level#1{^{\hskip.2ex(#1)}}
\def\idx#1{_{#1}}
\def\allowhyphenation{\penalty1000\hskip0pt}
\def\dash{\discretionary{-}{}{-}\allowhyphenation}
\author{Philipp Grohs}
\address{P.\ Grohs:
	King Abdullah University of Science and Technology, Saudi Arabia,
	and TU Graz, Austria. pgrohs@tugraz.at}
\author{Johannes Wallner}
\address{J.\ Wallner: TU Graz, Austria. Email j.wallner@tugraz.at}
\title
	[Multiscale decompositions for manifold-valued data]
	{Definability and stability of multiscale \\
	decompositions for manifold-valued data.}
\begin{document}

\begin{abstract}
 We discuss multiscale representations of discrete manifold\dash
valued data. As it turns out that we cannot expect general
manifold-analogues of biorthogonal wavelets to possess perfect
reconstruction, we focus our attention on those constructions
which are based on upscaling operators which are either 
interpolating or midpoint\dash interpolating. For definable multiscale
decompositions we obtain a stability result.
\end{abstract}

\maketitle

\setcounter{tocdepth}{2}
\tableofcontents

\newcommand{\Z}{{\mathbb Z}}
\newcommand{\R}{{\mathbb R}}
\newcommand{\schoen}{\mathcal}
\newcommand{\D}{{\schoen D}}
\newcommand{\Ss}{{\schoen S}}
\newcommand{\Rr}{{\schoen R}}
\newcommand{\Q}{{\schoen Q}}

\newpage

\section{Introduction}

\subsection{The problem area.}

The correct multiscale representation of manifold\dash valued data is a basic
question whenever one wishes to eliminate the arbitrariness in choosing
coordinates for such data, and to avoid artifacts caused by applying linear
methods to the ensuing coordinate representations of data.
This question appears to have been proposed first by
D.\ Donoho \cite{donoho-lie}. The detailed paper
\cite{urrahman-2005} describes different constructions, including most
of ours,  and states results
inferred from numerical experiments, but without giving proofs. A series
of papers, starting with \cite{wallner-2005-cca}, has since dealt
with the systematic analysis of upscaling operations on discrete data -- also
known under the name {\em subdivision rules} -- in the case that data live
in Lie groups, Riemannian manifolds, and other nonlinear geometries.
Regarding smoothness of limits, a satisfactory solution has been
achieved by means of the method of 
{\em proximity inequalities} which also play a role in the present paper.
Multiscale decompositions in particular have been investigated by
\cite{grohs-2009-wav} (characterizing smoothness by decay of
detail coefficients) and \cite{grohs-2009-st} (stability).

The present paper studies multiscale decompositions which are analogous
to linear biorthogonal wavelets and reviews the known examples based on
interpolatory and midpoint\dash interpolating subdivision rules including
the simple Haar wavelets. It turns out, however, that it seems unlikely
that a rather general way of defining manifold analogues of linear
constructions can have perfect reconstruction, which is the first
main result of this paper, even if it turns out to be rather vague.
For those multiscale
decompositions which exist, we show a stability
theorem which represents the second main result of the paper.
We further discuss averaging procedures
which work in manifolds equipped with an exponential mapping and which
generalize the well known Riemannian center of mass. This discussion
does not contain substantial new results, but it is included because
we need this construction for the definition of nonlinear up- and downscaling
rules, as well as for converting continuous data to discrete data in the first
place.

\subsection{Biorthogonal wavelets revisited}

We begin by briefly reviewing the notion of biorthogonal Riesz wavelets, 
but we are content with the properties relevant for the
following sections. We start with real-valued sequences 
$\alpha=(\alpha\idx i)_{i\in\Z}$ with
finite support which are called {\em filters} and define
the {\em upscaling rule}, or {\em subdivision rule} associated with the
filter $\alpha$ by 
	$$(S_\alpha c)\idx k
	:= \sum\nolimits_{l\in\Z}\alpha \idx{k-2l} c\idx l.$$ 
 Here $c:\Z\to V$ is any sequence with values in a vector space. 
The transpose of the upscaling rule (we skip the definition of
{\em transpose}) shall be the
{\em downscaling rule} $D$ associated
with the filter $\beta$, via
	$$ (D_\beta c)\idx k
	:=\sum\nolimits_{l\in\Z}\beta \idx{l-2k} c\idx l. $$ 
 Upscaling and downscaling commutes with the 
left shift operator $(Lc)\idx k =c\idx {k+1}$ in the following way:
	$$
	S_\alpha L=L^2 S_\alpha, \quad D_\beta L^2=L D_\beta.
	$$
 The most basic rules are defined by the delta sequence: $S_\delta$
inserts zeros between the elements of the original sequence,
and $D_\delta $ deletes every other element. All rules
can be expressed in terms of $S_\delta$, $D_\delta$, and convolution:
	\begin{align*}
	& S_\delta c=(\dots,c\idx 0,0,c\idx 1,0,c\idx 2,\dots), \quad
	D_\delta c=(\dots,c\idx 0,c\idx 2,c\idx 4,\dots)
	\\ \implies
	&
	S_\alpha c = (S_\delta c) \mathbin* \alpha, \quad
	D_\beta c = D_\delta (c\mathbin* \beta).
	\end{align*}
 We suppress the indices $\alpha,\beta$ from now on.
We assume a further upscaling rule $R$ and a downscaling rule $Q$ which
shall be high pass filters in contrast to low-pass filters
$S$ and $D$.\footnote{usually formulated in terms of Fourier transforms.}

Any sequence $c\level j$, which is interpreted as {\em data at level $j$}
may be recursively decomposed into a 
low-frequency-part $c \level{j-1}$ (data at level $j-1$)
and a high-frequency-part $d\level j$ (details at level $j$) by letting
	\begin{equation}\label{eq:decomp}
	c\level {j-1}=Dc\level j, \quad d\level j=Q c\level j.
	\end{equation}
 This process can be iterated in order to obtain a pyramid consisting of 
{\em coarse data} $c\level 0$ and {\em wavelet coefficients}
$d\level 1,\dots,d\level j$.
Data at level $j$ shall be be reconstructed by
	\begin{equation}
	c\level j = Sc\level {j-1} + R d\level j,
	\end{equation}
 which works precisely if the so-called {\it
quadrature mirror filter equation},
	\begin{equation}\label{eq:qmf}
	SD+RQ=\id,
	\end{equation}
holds. It makes sense to require certain further (`biorthogonality')
properties like
$QR=\id$. In particular, high pass downscaling should annihilate
everything generated by low pass upscaling:
	\begin{equation} \label{eq:biorth}
	QS=0. 
	\end{equation}
 An important consequence of the previous properties is that we can 
rewrite \eqref{eq:decomp} in the form
	\begin{equation}\label{eq:wavalt}
	c\level {j-1} = Dc\level j, \quad
	d\level j = Q (c\level j - S c\level {j-1}).
	\end{equation}
 There are many examples of biorthogonal wavelet 
decompositions. In the following we give some examples.

\subsection{Examples: interpolating and midpoint-interpolating schemes}

\begin{example}  \label{ex:interp-lin}
 An upscaling scheme is called {\em interpolating}, if it keeps
the original data, which is expressed by
	$$
	(Sc)\idx {2k}=c\idx k \iff D_\delta S=\id.
	$$
 For interpolating schemes, downscaling is simply
	$D  = D_\delta$. 
 Then detail coefficients are the difference between data $c$
and the prediction gained via upscaling of $Dc$. With the left shift
operator, we can write
	$$
	Q c = DL(c-SDc). 
	$$
If we define detail coefficients via \eqref{eq:wavalt}, then we can
also employ the modified downscaling operator 
	$$
	Q^{\text{modif}} = DL.
	$$
 Reconstruction works via a basic upscaling rule:
	$$
	R = L^{-1} S_\delta
	$$
It is easy to check that we have indeed perfect reconstruction.
 An example is furnished by the four\dash point scheme
\cite{dyn:1987:4p} defined by $\alpha\idx{\{-3,\dots,3\}}$ $=$
	$(-{1\over 16}$, 
	$0$,
	${9\over 16}$, 
	$1$,
	${9\over 16}$, 
	$0$,
	$-{1\over 16})$.  
 The action $c\level{j-1}=D c\level j$ of the decimation operator
is consistent with
the interpretation of discrete data $c\level j\idx k$ as {\em samples}
of a continuous function $f(t)$ at the parameter value  $t={1\over 2^j}k$.
\end{example}

\begin{example} \label{ex:haar-lin}
 The {\em Haar scheme} is defined by the rules
	\begin{align*}
	S = (L+\id) S_\delta , \quad 
	D = {1\over 2} D_\delta (L+\id) ,\quad
	R = (\id -L) S_\delta, \quad
	Q = {1\over 2}D_\delta (\id-L) ,
	\end{align*}
which operate as follows:
	\begin{align*}
	Sc &= (\dots,c\idx 0,c\idx 0,c\idx 1,c\idx 1,\dots), 
	\\
	Rd & = (\dots, d\idx 0, -d\idx 0, d\idx 1, -d\idx 1,\dots), 
	\\
	Dc &= (\dots,{c\idx 0+c\idx 1\over 2},
		{c\idx 2+c\idx 3\over 2},\dots), 
	\\
	Qc &= (\dots, {c\idx 0-c\idx 1\over 2}, 
		{c\idx 2-c\idx 3\over 2},\dots). 
	\end{align*}
\end{example}

\begin{example} \label{ex:midpt-lin}
 A subdivision scheme $S$ is called {\em midpoint\dash interpolating}, if
it is a right inverse of the decimation operator $D$ which computes
midpoints and which is also used for the Haar wavelets of 
Example \ref{ex:haar-lin}:
	$$
	DS = \id, \quad \mbox{where}\quad
	Dc = (\dots,{c\idx 0+c\idx 1\over 2},
		{c\idx 2+c\idx 3\over 2},\dots).
	$$
 The detail coefficients are the difference between that actual data
$c$ and the imputation $SDc$ found by upscaling the decimated data.
Since $c-SDc$ is by construction in the kernel of $D$ (i.e., 
is an alternating sequence), it contains redundant information. We thus
complete our definitions by letting
	\begin{align*}
	Qc &= D_\delta (c-SDc)  
		= (\dots,(c-SDc)\idx 0, (c-SD c)\idx 2,\dots),
	\\ Rd & =
		(\id-L) S_\delta d = 
		(\dots, d_0,-d_0,d_1,-d_1,\dots).
	\end{align*}
 If we define detail coefficients via \eqref{eq:wavalt}, then  a much
simpler downscaling operator for details can be employed:
	$$
	Q^{\text{modif}} = D_\delta.
	$$
 The action $c\level{j-1}=D c\level j$ of the decimation rule 
is consistent with the interpretation of
discrete data $c\level j\idx k$ as an {\em average} of
continuous data over the interval ${1\over 2^j} \cdot [k,k+1]$. 

The 
defining relation implies that any such $S$ can
be turned into an interpolating subdivision rule $\wt S$ by adding one
round of midpoint computation:
	$$
	\wt S  = {1\over 2}(L+\id)S. 
	$$
 $\wt S$ is interpolatory,
since $D_\delta \wt S = {1\over 2}(D_\delta L + D_\delta ) = DS=\id$.
The relation $S=2(L+\id)^{-1}\wt S$
leads to a way of finding midpoint\dash interpolating
schemes from interpolatory ones, since it can be turned into an 
effective computation by the use of symbols \cite{dyn-2002-ss}. For more
information on that kind of schemes, see e.g.\ \cite{donoho-block}.
\end{example}

\section{Biorthogonal decompositions for manifold-valued data}

\subsection{Manifold analogues of linear elementary constructions.}

The main idea to apply the previous constructions to
manifold\dash valued data is to find replacements for the elementary
operations they are composed of. These are the operations $-$
(``vector is difference of points''),  $+$ (``point plus vector is
a point''), and computing the weighted average of points, which again
yields a point. As to which kind of data are points and which are
vectors, data $c\level j$ at level $j$ shall be manifold-valued sequences
of points, while detail coefficients $d\level j$ shall be sequences with
values in vector spaces associated with the manifold. 

For data with values
in a Lie group $G$, with associated Lie algebra $\Lie g$, we  let
	$$
	p\gplus v := p\exp(v), \quad
	q\gminus p := \log(p^{-1}q)\in\Lie g, 
	$$
 where $\exp$ is the group exponential function and $\log$ is its inverse.
For matrix groups, we have $\exp(x)=\sum_{k\ge 0} x^k/k!$ as usual
(see e.g.\ \cite{bump-2004-lg} for Lie theory).
In a surface or Riemannian manifold $M$, we use the exponential mapping
$\exp_p$ which maps a vector $v$ in the tangent space $T_p M$ to the
endpoint of a geodesic of length $\|v\|$ which emanates from $p$ with
initial tangent vector $v$:
	$$
	p\gplus v := \exp_p(v), \quad
	q\gminus p := \exp_p^{-1}(q) \in T_p M.
	$$
 We have thus found analogues $\gplus$ and $\gminus$ of the $+$ and $-$
operations, respectively. An average with weights of total sum $1$ is in
Euclidean space equivalently definable by 
	\begin{equation}
	\label{mean:elem:equiv}
	m=\sum\alpha_j x_j \iff
	\sum\alpha_j(x_j-m)=0  \iff
	\sum\alpha_j \dist(x_j,m)^2=\min.
	\end{equation}
 The middle definition 
carries over to both Lie groups and Riemannian manifolds
(provided $m$ is unique, which it locally is): 
	\begin{equation}
	\label{eq:def:average0}
	\sum\alpha_j(x_j\gminus m)=0.
	\end{equation}
 In Riemannian manifolds, this average is the same as the one 
defined by the right hand condition.
 These constructions have been employed to define operations on
manifold\dash valued data before, in particular subdivision processes.
For more details the reader is referred to \cite{grohs-2009-st}. 

Another way of redefining averages is by means of an auxiliary
base point: In a vector space, we have
	$$
	\sum\alpha_j = 1\implies \sum \alpha_j x_j = 
	x + \sum\alpha_j(x_j-x),
	$$
 for any choice of $x$.  This leads to the definition
	\begin{equation}
	\label{eq:basepoint}
	x\gplus \Big(\sum\alpha_j(x_j\gminus x)\Big)
	\end{equation}
 of manifold average which involves the choice of an additional base point.

\begin{example} \label{ex:midpoint}
It is not difficult to see that the weights
$\alpha_0=\alpha_1={1\over 2}$ lead to a symmetric average
$m=\mu(x_0,x_1)=x_0\gplus {1\over 2}(x_1\gminus x_0) =
x_1\gplus {1\over 2}(x_0\gminus x_1)$, which can be taken as 
the manifold\dash midpoint of $x_0$ and $x_1$. It fulfills the balance
condition $(x_1\gminus m) + (x_0\gminus m)=0$.
\end{example}

An obvious
generalization, where the averaging process possibly works with a continuum
of values is defined as follows: Consider a set $X$ which
is equipped with some probability measure. For instance we could take the unit interval
$X=[0,1]$ with Lebesgue measure.
The weighted average $m$ of data $(f(t))_{t\in X}$
with values in a vector space is defined by the following equivalent definitions
	\begin{equation}
	m = \int_X f(x) \iff
	\int_X (f(x)-m) = 0 \iff
	\int_X  \dist(f(x),m)^2=\min.
	\end{equation}
 In the case that $X$ is the integers, and the measure means giving each
$i\in\Z$ the weight $\alpha_i$, then this definition reduces to
\eqref{mean:elem:equiv}. Also the integral version of the average can be
made to work for manifold\dash valued data, by defining $m$ via
	\begin{equation}
	\label{eq:def:average1}
	\int_X (f(x)\gminus m) = 0.
	\end{equation}
 In the Riemannian case, which has been thoroughly discussed by
Karcher \cite{karcher-1977-cm}, this is equivalent to 
$\int_X  \dist(f(x),m)^2=\min$. It is then called the Riemannian
center of mass (see Section IX.2 of \cite{kobayashi-69}).

\subsection{Manifold versions of filters.}

 We now define nonlinear analogues of the up- and downscaling rules
$S,D,Q,R$. In order to distinguish them from the corresponding nonlinear
rules, we write the latter as $\Slin$, $\Dlin$, $\Qlin$, $\Rlin$. The 
symbols $\Ss,\D,\Q,\Rr$ denote nonlinear up- and downscaling operators 
which like the linear ones commute with the left shift operator 
in the following way:
	$$
	\Ss L=L^2\Ss, \quad \D L^2=L\D, \quad
	\Rr L=L^2\Rr, \quad \Q L^2=L\Q.
	$$
 We now decompose manifold-valued data `at level $j$', which are denoted
by the symbol $c\level j$ in a manner similar to \eqref{eq:wavalt}:
	\begin{equation}\label{eq:ndecomp} 
	c\level {j-1} = \D c\level j\quad 
	d\level j = \Q \big(c\level j \gminus \Ss \D c\level j\big).
	\end{equation}
 By iteration  we arrive at 
data $c\level 0$ at the coarsest scale together with a pyramid
of detail coefficients $d\level 1,\dots , d\level j$. 
In order to obtain perfect reconstruction via
	\begin{equation}
	c\level j = \Ss c\level{j-1} \gplus \Rr d\level j
	\end{equation}
  we impose the following
condition on the nonlinear operators which could be interpreted as
a nonlinear quadrature mirror filter equation:
	\begin{equation}\label{eq:nqmf}
	\Ss \D c \gplus (\Rr\Q(c\gminus \Ss\D c)) 
	= c\quad \mbox{for all}\ c.
	\end{equation}

\subsection{Examples: interpolating and midpoint-interpolating schemes}

 \begin{example} \label{ex:haar-geom}
 (manifold version of Example \ref{ex:haar-lin})
 We show how the Haar scheme can be made to 
work in groups and in Riemannian manifolds. With the midpoint
$\mu(p,q)$ of Example \ref{ex:midpoint} we let
	\begin{align*}
	\Ss c  &= \Slin c = (\dots,c\idx 0,c\idx 0,c\idx 1,c\idx 1,\dots),
	\\
	\D c  &= (\dots,\mu(c\idx 0,c\idx 1),\mu(c\idx 1,c\idx 2),\dots) 
	\end{align*}
 while $\Q=\Qlin$ and $\Rr=\Rlin$. Indeed,  $c\gminus\Ss\D c$
is an alternating sequence of vectors, and the detail coefficients
associated with data $c$ are given by
	\begin{align*}
	d & =\Qlin(c\gminus\Ss\D c) 
	\\& =
		\Qlin(\dots,
		c\idx 0\gminus \mu(c\idx 0,c\idx 1),
		c\idx 1 \gminus \mu(c\idx 0,c\idx 1),
		c\idx 2\gminus \mu(c\idx 2,c\idx 3),
		\dots),
	\\&= (\dots,
		c\idx 0\gminus \mu(c\idx 0,c\idx 1), 
		c\idx 2\gminus \mu(c\idx 2,c\idx 3), 
		\dots) .
	\end{align*}
 It is obvious that with this definition, $\Ss\D c\gplus \Rr d =c$,
so we have perfect reconstruction.
\end{example}

\begin{example} (manifold version of Example \ref{ex:interp-lin})
To find a nonlinear analogue $\Ss$ of a linear
upscaling rule defined by affine averages, we can employ geometric
averages instead. In this way the interpolating scheme $\Slin=S_\alpha$
can be transferred to the geometric setting, by letting
	$$
	(\Ss c)\idx {2k} = c_k, \quad
	\sum\nolimits_{r\in\Z}\alpha \idx{2r+1} \big(c\idx {k-r}
		\gminus (\Ss c)\idx {2k+1}\big) = 0.
	$$
 The remaining rules can be taken from the linear case (using the
fact that the simplest rules can be applied to
{\em any} sequence, as its elements do not undergo computations).
	$$
	\D=\Dlin = D_\delta, \quad 
	\Q = \Qlin = Q^{\text{modif}} = LD_\delta, \quad
	\Rr = \Rlin = L^{-1} S_\delta.
	$$
 From the interpolating property of $\Ss$ we see that we have perfect
reconstruction.
\end{example}

 \begin{example}  \label{ex:midpt-manif}
(manifold version of Example \ref{ex:midpt-lin})
In order to make a midpoint\dash interpolating rule $\Slin$ work on manifolds,
we define an upscaling operator $\Ss$ which retains the crucial property that
$c\idx k$ is the midpoint of $(\Ss c)\idx {2k}$ and $(\Ss c)\idx {2k+1}$.
For this purpose we use \eqref{eq:basepoint}. We
introduce the following notation for sequences $c, v$ and a point $x\in M$:
	$$
	(c\gminus x)\idx k := c\idx k\gminus x, \quad
	(x\gplus v)\idx k := x\gplus v\idx k,
	$$
 and define
	$$
		(\Ss c)\idx {2k}
	=  
		c\idx k \gplus (\Slin (c\gminus c\idx k))\idx {2k},
	\quad
		(\Ss c)\idx {2k+1}
	=  
		c\idx k \gplus (\Slin (c\gminus c\idx k))\idx {2k+1}
	.
	$$
 It is clear from $(c\gminus c\idx k)\idx k = 0$ and the midpoint\dash
interpolating property of $\Slin$, that $\Ss$ is also midpoint\dash
interpolating:
	$$
		\mu\big((\Ss c)\idx {2k},(\Ss c)\idx{2k+1}\big) = c\idx k.
	$$
 We use the same downscaling operators $\Q$, $\D$  as in the
Haar case of Example \ref{ex:haar-geom}, which yields
	$$
	d\level j\idx k =
	(c\level j\gminus \Ss c\level {j-1})\idx {2k}.
	$$
 By midpoint interpolation, $c\level {j-1}$ and $d\level j$ together
determine the original data $c\level j$: With the geodesic reflection
$\sigma_x(y)$ of $y$ in the point $x$  defined by
	$$
	\sigma_x(y) = x\gplus \big(-(y\gminus x)\big) \quad
	\mbox{or, locally equivalently,} \quad
 	\mu(y,\sigma_x(y))=x,
	$$
we have
	$$
		c\level j\idx{2k} 
	 =
		(\Ss c\level {j-1})\idx {2k}
		\gplus d\level j\idx k,
	\quad 
		c\level j\idx{2k+1} 
	=
		\sigma_{c\level{j-1}\idx k} \big(c\level j\idx{2k}\big).
	$$
 This construction is already contained in \cite{urrahman-2005}.
A nonlinear upscaling operator
$\Rr$ which effects exactly this construction via $c\level j = c \level{j-1}
\gplus \Rr d\level j$ necessarily depends on the data and may be defined by
	$$
	(\Rr d)\idx{2k} = d\idx k ,\quad
	(\Rr d)\idx {2k+1} 
	=
		\sigma_{c\level{j-1}\idx k} \Big( 
		(\Ss c\level {j-1})\idx {2k}
		\gplus d\level j\idx k
		\Big)
		\gminus \Ss c\level j\idx {2k+1}
		.
	$$
 In Riemannian geometry we cannot further simplify this expression.
In the case of matrix groups, we employ the fact that
$\sigma_x(y) = x y\inv x$ and that successive points with
indices $2k,2k+1$ of $\Ss c$ are converted
into each other by geodesic reflection in the point $c\idx k$:
	\begin{align*}
		(\Rr d)\idx {2k+1} 
	&=
		\log\Big[
		\Big(\Ss c\level {j-1}\idx {2k+1}\Big)\inv
		\Big({c\level{j-1}\idx k} \Big)
		\Big(
			\Ss c\level {j-1}\idx {2k} 
			\exp d\level j\idx k
		\Big)\inv
		\Big({c\level{j-1}\idx k} \Big)
		\Big]
	\\ &=
		\log\Big[
		\Big(c\level {j-1}\idx k\Big)\inv
		\Big(\Ss c\level {j-1}\idx {2k}\Big)
		\exp \Big(-d\level j\idx k\Big)
		\Big(\Ss c\level {j-1}\idx {2k} \Big)\inv
		\Big({c\level{j-1}\idx k} \Big)
		\Big]
	\\ &=
		-\Ad_
		{\mbox{\small $(c\level {j-1}\idx k)\inv
		(\Ss c\level {j-1}\idx {2k})$}}
		\big (d\level j\idx k \big)
	=
		-\Ad_
		{\mbox{\small $\exp\big(
			(\Ss c\level {j-1}\idx {2k})
			\gminus
			(c\level {j-1}\idx k) 
		\big)$}}
		\big (d\level j\idx k \big)
	\\&=
		-\Ad_
		{\mbox{\small $\exp\big(
			\Slin(c\level{j-1}\gminus c\level{j-1}\idx k)\idx{2k}
		\big)$}}
		\big (d\level j\idx k \big).
	\end{align*}
 Here we have used the notation $\Ad_g(v)=gvg^{-1}$. Note that in abelian
groups and especially in Euclidean space, where $g\gplus v = g+v$, 
this formula reduces to $\Rr d\idx{2k+1}=-\Rr d\idx {2k}$.
\end{example}

\subsection{On the general feasibility of the construction} 

The examples of geometric and nonlinear multiscale decompositions
given above are special cases, which are based on interpolatory
subdivision rules, or at midpoint\dash interpolating rules.
It is not clear how perfect
reconstruction can be achieved in general. We shall presently see that
there are some basic obstructions which disappear in the linear case.
For simplicity we consider only periodic sequences, because then 
the upscaling and downscaling rules have a finite\dash dimensional
domain of definition. 

\begin{prop} \label{prop:necessary} Smooth rules $\Ss,\D,\Q,\Rr$
can lead to detail coefficients with perfect reconstruction for
periodic data $c\in M^{2n}$ only
if the rank of the mapping $ c\mapsto c\gminus \Ss\D c $ equals
$n\cdot\dim M$, which is half the generic rank of such a mapping.
 \end{prop}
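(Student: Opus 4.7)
The plan is to linearize the reconstruction map and play two rank inequalities against each other. Write $F(c):=c\gminus\Ss\D c$ and $G(c):=\Q F(c)$, so that the detail coefficients $G(c)$ live in a fixed $n\dim M$-dimensional space $V$ and $\Phi(c):=(\D c,G(c))$ is a smooth map $M^{2n}\to M^n\times V$. Perfect reconstruction forces $\Phi$ to be smoothly left-invertible; since source and target both have dimension $2n\dim M$, the differential $d\Phi_c$ must be injective, hence bijective, at a generic $c$.

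As a preliminary step I would show that $d\D_c$ is surjective. For $v\in\ker d\D_c$ one has $d\Phi_c(v)=(0,dG_c(v))$, so injectivity of $d\Phi_c$ implies $dG_c$ is injective on $\ker d\D_c$. Since $dG_c$ takes values in an $n\dim M$-dimensional space, $\dim\ker d\D_c\le n\dim M$; on the other hand $\dim\ker d\D_c\ge 2n\dim M-n\dim M$ because $\D$ maps into $M^n$. Equality follows, so $\dim\ker d\D_c=n\dim M$, and the restriction of $dG_c$ to this kernel is an injective linear map into $V$; consequently $\rk(dG_c)\ge n\dim M$. Combined with the chain rule $dG_c=d\Q_{F(c)}\circ dF_c$, which forces $\rk(dG_c)\le\rk(dF_c)$, this gives the lower bound $\rk(dF_c)\ge n\dim M$.

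For the reverse inequality I would rewrite the reconstruction identity $\Rr\Q F(c)=F(c)$ as $F=\Rr G$. The chain rule then yields $dF_c=d\Rr_{G(c)}\circ dG_c$, so $\rk(dF_c)\le\rk(dG_c)\le\dim V=n\dim M$. Combining the two bounds gives $\rk(dF_c)=n\dim M$ exactly, which is half the generic value $2n\dim M=\dim M^{2n}$. The only subtlety, not a serious obstacle, is organizational: one must treat the base-point dependence of the tangent-vector-valued map $F$ so that the argument really reduces to the rank theory of smooth maps between finite-dimensional manifolds. The formulation above avoids committing to any trivialization of the tangent bundle and thereby keeps the rank counts intrinsic.
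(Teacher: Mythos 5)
Your proof is correct, and half of it coincides with the paper's: the upper bound $\rk(dF_c)\le n\cdot\dim M$ for $F(c)=c\gminus\Ss\D c$ is obtained in both cases from the factorization $F=\Rr\Q F$ through the $n\cdot\dim M$-dimensional space of detail coefficients. The lower bound is where you genuinely diverge. The paper writes the identity on $M^{2n}$ as $c\mapsto \Ss\D c\gplus\big(c\gminus\Ss\D c\big)$ and uses subadditivity of rank: since $c\mapsto\Ss\D c$ factors through $\D$ and hence has rank at most $n\cdot\dim M$, the summand $F$ must contribute at least $n\cdot\dim M$ for the combination to reach the full rank $2n\cdot\dim M$ of the identity. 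You instead observe that the smooth reconstruction map $(c',d)\mapsto\Ss c'\gplus\Rr d$ is a left inverse of $\Phi=(\D,\Q F)$, so $d\Phi_c$ is injective, and then run a dimension count on $\ker d\D_c$. Both arguments are short linear algebra; the paper's is marginally more compact, while yours extracts the extra information that $d\D_c$ is surjective, that $\dim\ker d\D_c=n\cdot\dim M$ exactly, and that $\Q F$ itself already attains the full rank $n\cdot\dim M$ (so no further degeneracy hides inside $\Q$). The base-point issue you flag at the end --- that in the Riemannian case $F$ takes values in a bundle rather than in a fixed vector space, so ``rank'' must be read fiberwise or in a trivialization --- is glossed over in the paper as well, so it is not a defect of your argument relative to the original.
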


\begin{proof}
 Equation \eqref{eq:nqmf}, which expresses perfect
reconstruction, is equivalent to 
	$$
	\Rr\Q x  = x, \quad
	\mbox{where}\ x=c\gminus \Ss\D c.
	$$
It follows that the mapping
$c\mapsto c\gminus\Ss\D c=\Rr\Q(c\gminus\Ss\D c)$
has rank $\le n\cdot\dim M$, because $\Q$, mapping $2n$ data items
to $n$ detail coefficients, has this property. As to 
the mapping $c\mapsto \Ss\D c$, its rank does not exceed $n\cdot \dim M$,
because $\D$ has this property. In case the
rank is less than 
$n\cdot\dim M$, the mapping $\id_{M^{2n}}: c\mapsto
\Ss\D c \gplus (c\gminus \Ss\D c)$ would
have rank $<2n\cdot\dim M$, a contradiction.
 \end{proof}

The condition of rank $n\cdot\dim M$
which is necessary for perfect reconstruction
as mentioned in Prop.\ \ref{prop:necessary} is unlikely to be satisfied if
both upscaling by $\Ss$ and downscaling by
$\D$ are defined via geometric averaging rules derived from
linear rules $S_\alpha$ and $D_\beta$. The following discussion
of derivatives should make this clear:
We have
	\begin{equation}
	\label{eq:SDdef}
	\sum\nolimits_l \alpha\idx {k-2l}(c\idx l\gminus \Ss c\idx k)=0,
	\quad
	\sum\nolimits_l  \beta \idx{l-2k}( c\idx l\gminus \D c\idx k)=0,
	\end{equation}
 and we are interested in the change in $(\Ss\D c)\idx k$ if each
$c\idx l$ undergoes a 1\dash parameter variation.
We use the abbreviations $\phi$ and $\psi$ for the derivatives of
$\gminus$  with respect to the first and second argument, respectively.
In the Lie group case, where all tangent vectors are represented by
elements of the Lie algebra $\Lie g$, both $\phi$ and $\psi$ are linear
endomorphisms of $\Lie g$. In case of Riemannian manifolds, where
$\gminus:M\times M\to TM$, both $\phi,\psi$ map to 
$T_{p\gminus q}(TM)$. As the next formula shows it is not necessary to
look closer at this abstract tangent space, because we always combine
$\psi^{-1}$ with $\phi$ and the image of $\phi$ occurs only implicitly.
Differentiation of \eqref{eq:SDdef} implies that
	$$
		{d\over dt}(\D c)\idx k  
	=
		-\Big(\sum\nolimits_l\beta\idx {l-2k} 
			\psi_{c\idx l,\D c\idx k}
		\Big)^{-1}
		\Big(\sum\nolimits_l\beta\idx {l-2k} 
			\phi_{c\idx l,\D c\idx k}
			{d\over dt} c_l
		\Big).
	$$
 and further
	\begin{align*}
		{d\over dt}(\Ss\D c)\idx k  
	&=
		\Big(\sum\nolimits_l\alpha\idx {k-2l} 
			\psi_{\D c\idx l,\Ss\D c\idx k}
		\Big)^{-1}
	\\&
		\hphantom{=}\cdot
		\Big(\sum\nolimits_l\alpha\idx {k-2l} 
			\phi_{\D c\idx l,\Ss\D c\idx k}
		\big(\sum\nolimits_r\beta\idx {r-2l} 
			\psi_{c\idx r,\D c\idx l}
		\big)^{-1}
		\big(\sum\nolimits_r\beta\idx {r-2l} 
			\phi_{c\idx r,\D c\idx l}
			{d\over dt} c_r
		\big)
		\Big)
	.	
	\end{align*}
 The precise form of this equation is not relevant, but by observing
that the differentials of $\gminus$ have to be evaluated at {\em many more}
independent locations than the desired rank $n\cdot\dim M$ would suggest,
it is clear that 
only very special filters can lead to rank $n\cdot\dim M$. The situation in the 
linear case is different: The differentials of $\gminus$ are constant,
and the condition that the previous formula defines a mapping of rank
$n$ is an algebraic condition involving the coefficients
of filters $\alpha,\beta$.

Similar considerations show that also the so\dash called log\dash exponential
construction, where a nonlinear rule is constructed via \eqref{eq:basepoint}
(see Ex.\ \ref{ex:midpt-manif}) do not in general yield the rank condition
expressed by  Prop.\ \ref{prop:necessary}.

\section{Stability analysis}

 The point of going through the trouble of decomposing a signal is that 
one expects many detail coefficients $d\level k\idx l$ to be small and 
therefore to be negligible. This is the basis of thresholding in order to
compress data, which makes sense only if one can control 
the change in reconstructed data if we change the
detail coefficients by resetting some of them to zero.
Similarly, quantizing data will result in deviation
from the original. Again, it is important to control that change.
It is the purpose of this section to establish a {\em stability}
result for nonlinear rules which applies to such situations. 

\subsection{Coordinate representations of nonlinear rules}

For the stability analysis we transfer all manifold operations
to a local coordinate chart. This is justified only if the constructions
we are going to analyze are local. The linear upscaling and downscaling
rules defined previously have this property, and so have the nonlinear
ones mentioned in the examples above.

The operators $\gplus$, $\gminus$ are replaced by their respective
coordinate representations, which are denoted by the same
symbols and which are defined in open subsets of
suitable coordinate vector space: We assume that
$\gplus$ maps from $V\times W$ into $V$,
and $\gminus$ maps from $V\times V$ into $W$.
Besides smoothness they are assumed to fulfill the compatibility condition
	\begin{equation}
	\label{eq:compatibility}
	p\gplus (q\gminus p)=q.
	\end{equation}
We further assume that $\gplus$, $\gminus$
are Lipschitz functions, i.e., there exist 
constants $A,B$ with
	\begin{equation}
	A \|p-q\| \le \|p\gminus q\| \le B\|p-q\|.
	\end{equation}
 Locally this is always the case.
Our analysis of stability requires that the operators
$\Ss, \D, \Q, \Rr$ (we do not introduce new symbols for their
coordinate representations)
fulfill some reasonable assumptions which are 
listed below. 
Notation makes use of the symbol ``$\lesssim$'' which
means that there is a uniform constant such that the left hand side
is less than or equal to that constant times the right hand side.
For a sequence $w=(w\idx i)_{i\in\Z}$ we use the notation
$\|w\|:=\sup_{i\in\Z}\|w\idx i\|$.
 
\begin{itemize} \item 
{\it Boundedness of $\Q, \Rr$:}
	The mappings $\Q$, $\Rr$ operate on $W$\dash valued
sequences $w$, which are generated as the difference of point sequences.
They are supposed to satisfy $\|\Q w\|$, $\|\Rr w\|\lesssim \|w\|$, with
respect to some norm $W$ is equipped with.

 \item {\it Reproduction of constants:} For constant data
we require that $\Ss c =c$ and $\D c =c$.

 \item Each of $\Ss,\Rr,\D,\Q$ shall be as smooth as is needed
(in general a little more than $C^1$ will suffice).

\item {\it First-order linearity of $\Ss,\D$ on constant data:} For 
constant sequences  we require that \\[\smallskipamount]
	\begin{minipage}{\linewidth}
	\begin{equation}
	\label{eq:1storder}
	d\Ss\big|_c = \Slin, \quad
	d\D\big |_c =\Dlin 
	\end{equation}
	\end{minipage} \\[\belowdisplayskip]
 for some low-pass upscaling and downscaling operators
$\Slin$, $\Dlin$ operating on $V$\dash valued sequences, and where
$\Slin$ is a convergent
subdivision rule. The only exception shall be Haar case,
where $\Slin=S_\delta $ shall be the splitting rule
(see Ex.\ \ref{ex:interp-lin}). This condition is natural when
one considers $\Ss,\D$ as geometric analogues of linear
constructions which are defined by replacing affine averages
by geometric averages, or by replacing the $+$ and $-$ operations
by $\gplus$ and $\gminus$.

\end{itemize}

\subsection{Stability Results}

The aim of this section is 
to prove the following stability theorem: 

\begin{theorem}\label{thm:stability}
	Suppose that $\Ss, \D, \Q,\Rr$ are upscaling and downscaling
operators which fulfill the nonlinear version \eqref{eq:nqmf}
of the quadrature mirror filter equation, and which also
fulfill the technical conditions listed above.
Consider a data pyramid $(c\level j)_{j\ge 0}$ with
$c\level{j-1}=\D c\level{j}$ which enjoys the weak contractivity property
	\begin{equation}
	\label{eq:cdec}
	\|\Delta c_j\|\lesssim \mu^j \quad (\mu<1).
	\end{equation}
Then the reconstruction procedure of data $c\level j$ at level $j$
from coarse data $c\level 0$ and details 
$d\level 1,\dots , d\level j$ is stable in the sense that there are
constants $D$, $E_1$, $E_2$ such that for all $j$ and any further data
pyramid $\wt c\level i$ with details $\wt d\level i$ we have
	\begin{align}
	\label{eq:close} 
	& \|c\level 0-\wt c\level 0\|\le E_1 ,\quad 
	\|d\level k - \wt d\level k\|\le E_2\mu^{k}\mbox{ for all } k 
	\\ \implies &
	\label{eq:stabestimate} \|c\level j - \wt c\level j\| \le 
	D\big(\|c\level 0-\wt c\level 0\| 
		+\sum\nolimits_{k=1}^j \|d\level k - \wt d\level k\|\big). 
	\end{align}
\end{theorem}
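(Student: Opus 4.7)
The strategy is to work in a local coordinate chart, to quantify the deviation of each nonlinear operator from its first\dash order model by proximity inequalities, and then to iterate the reconstruction identity $c\level j=\Ss c\level{j-1}\gplus\Rr d\level j$ in such a way that the nonlinearities only contribute a summable perturbation to an otherwise stable linear recursion.

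First I would pass to coordinates (justified by the locality of all operators involved) and record the Lipschitz bounds on $\gplus,\gminus$. Using \eqref{eq:cdec}, the reconstruction identity for the perturbed pyramid, and reproduction of constants by $\Ss$, I would then show by induction on $j$ that the perturbed pyramid inherits the weak contractivity
\begin{equation*}
	\|\Delta\wt c\level j\|\lesssim \mu^j.
\end{equation*}
The inductive step uses $\wt c\level j=\Ss\wt c\level{j-1}\gplus\Rr\wt d\level j$, boundedness of $\Rr$, and the fact that near constant data $\Ss$ differs from the contractive\dash on\dash differences operator $\Slin$ only by quadratic corrections in $\|\Delta\wt c\level{j-1}\|$; the smallness hypotheses $\|c\level 0-\wt c\level 0\|\le E_1$ and $\|d\level k-\wt d\level k\|\le E_2\mu^k$ are what keeps this in the near\dash constant regime.

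Next I would derive the fundamental proximity inequality
\begin{equation*}
	\|\Ss c-\Ss\wt c-\Slin(c-\wt c)\|\lesssim \bigl(\|\Delta c\|+\|\Delta\wt c\|\bigr)\,\|c-\wt c\|
\end{equation*}
together with analogous bounds for $\D$, for $\Rr,\Q$, and for the pointwise operations $\gplus,\gminus$. Each follows from a mean\dash value argument along the segment from $\wt c$ to $c$, combined with smoothness, the first\dash order linearity assumption \eqref{eq:1storder}, and the compatibility identity \eqref{eq:compatibility}; the prefactor $\|\Delta c\|+\|\Delta\wt c\|$ measures the distance of the linearization points from the submanifold of constant sequences, on which the derivatives of $\Ss,\D$ coincide exactly with $\Slin,\Dlin$.

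Setting $e\level j:=c\level j-\wt c\level j$ and subtracting the reconstruction formulas for the two pyramids, these proximity estimates together with the Lipschitz bound for $\gplus$ yield a recursion of the form
\begin{equation*}
	e\level j=\Slin e\level{j-1}+\Rlin\bigl(d\level j-\wt d\level j\bigr)+\eps\level j,\qquad \|\eps\level j\|\lesssim \mu^{j-1}\bigl(\|e\level{j-1}\|+\|d\level j-\wt d\level j\|\bigr).
\end{equation*}
Iterating and using that convergence of $\Slin$ implies uniform boundedness of its iterates on $\ell^\infty$, the dominant part $\Slin^j e\level 0+\sum_{k}\Slin^{j-k}\Rlin(d\level k-\wt d\level k)$ is controlled by the right\dash hand side of \eqref{eq:stabestimate}, while the error contribution $\sum_k\Slin^{j-k}\eps\level k$ is summable thanks to the geometric factor $\mu^{k-1}$. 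The hard part is to run this recursion together with the contractivity induction of step~2 as a \emph{joint} bootstrap on $j$: the estimate for $e\level j$ needs the contractivity of $\wt c\level{j-1}$, whose inductive step in turn needs an a priori bound on $\|e\level{j-1}\|$. The constants $E_1,E_2$ enter precisely to make this coupled induction close without accumulating a factor that grows with $j$.
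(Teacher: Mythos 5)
Your proposal is correct, and it shares the paper's overall architecture --- pass to coordinates, propagate the contractivity \eqref{eq:cdec} to the perturbed pyramid (the paper's Lemma \ref{lem:lip}), quantify the deviation of $\Ss$ from $\Slin$ near constant data, and exploit uniform boundedness of $\|\Slin^j\|$ together with geometric decay of the nonlinear corrections --- but you execute the decisive step by a genuinely different technique. The paper differentiates: it bounds $\partial P_j/\partial c\level 0$ and $\partial P_j/\partial d\level k$ by writing the composed differential, via the chain rule applied to $P_j=\Ss P_{j-1}\gplus\Rr d\level j$, as a product $(\Slin+U_1)\cdots(\Slin+U_j)$ with $\|U_k\|\lesssim(\mu+\eps)^k$, and then invokes the perturbation lemma for operator products (Lemma \ref{lem:perturb}); the Lipschitz estimate \eqref{eq:stabestimate} follows from the bounded differentials. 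You instead take finite differences: setting $e\level j=c\level j-\wt c\level j$, you derive a perturbed linear recursion $e\level j=\Slin e\level{j-1}+\cdots+\eps\level j$ from the two-point proximity inequality $\|\Ss c-\Ss\wt c-\Slin(c-\wt c)\|\lesssim(\|\Delta c\|+\|\Delta\wt c\|)\,\|c-\wt c\|$ and close with a discrete Gronwall iteration. Both estimates follow from the same hypotheses ($d\Ss|_e=\Slin$ on constants plus Lipschitz continuity of $d\Ss$); your route avoids differentiating the full reconstruction map at the price of needing the two-point inequality rather than the paper's one-point bound $\|d\Ss|_c-\Slin\|\lesssim\|\Delta c\|$, and the geometric factor $\mu^{k-1}$ in $\eps\level k$ plays exactly the role that the decay of $U_k$ plays in Lemma \ref{lem:perturb}. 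Two small remarks: the bootstrap you describe as joint in fact decouples, since the contractivity induction for $\wt c\level j$ uses only $\|\Delta\wt c\level 0\|\le\|\Delta c\level 0\|+2E_1$ and $\|\wt d\level k\|\le\|d\level k\|+E_2\mu^k$ and no bound on $e\level{j-1}$, so it can be established first and the error recursion run afterwards; and your argument covers only the case where $\Slin$ is a convergent subdivision scheme, whereas the theorem also admits the Haar case $\Slin=S_\delta$, which the paper dispatches separately by observing that there $\Ss=\Slin$ and no perturbation estimates are needed.
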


The assumption of decay given by \eqref{eq:cdec} is 
fulfilled for any finite data pyramid (simply 
adjust the constant which is implied by using the symbol ``$\lesssim$'').

\subsection{Proofs}

The remaining part of this section is devoted to the proof of this 
statement. 
Our arguments closely 
follow the ones in \cite{grohs-2009-st} which will enable us to occasionally 
skip over some purely technical details and focus on the main ideas.

The crux is to show that the differentials of the 
reconstruction mappings are uniformly bounded. 
We shall go about this task by using perturbation 
arguments. The justification of this approach lies in the fact that by our 
assumptions the nonlinear reconstruction procedure agrees with a linear 
one up to first order on constant data. Indeed, our assumptions already 
imply that $\Ss$ satisfies a \emph{proximity condition} with $\Slin $ in the 
sense of \cite{wallner-2005-cca}:

 \begin{lemma}\label{lem:prox} 
 With the above 
assumptions we have the inequalities
	\begin{align}
	\|\Ss c - \Slin  c\|  \lesssim \|\Delta c\|^2 ,
	\quad
	\|\D c - \Dlin  c\|  \lesssim  \|\Delta c\|^2.
	\end{align}
 \end{lemma}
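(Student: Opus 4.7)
The plan is to apply a second-order Taylor expansion of $\Ss$ (resp.\ $\D$) about a suitably chosen constant reference sequence. This strategy relies on the two key hypotheses in the list above: reproduction of constants by $\Ss$ and $\D$, and the first-order linearity condition \eqref{eq:1storder}.

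I treat $\Ss$ first; the argument for $\D$ is identical with $\Slin$ replaced by $\Dlin$. Fix an index $k$. Since both $\Ss$ and $\Slin$ are local, the quantity $(\Ss c-\Slin c)_k$ depends only on $c_l$ for $l$ in a finite index set $N_k$ of uniformly bounded diameter. Pick some $m\in N_k$ and let $\bar c$ denote the constant sequence with value $c_m$. Reproduction of constants yields $\Ss\bar c=\bar c$, and the convergence of $\Slin$ forces $\Slin\bar c=\bar c$, so $(\Ss\bar c-\Slin\bar c)_k=0$. Now introduce the scalar-valued function
$$
F(t):=\bigl(\Ss((1-t)\bar c+tc)-\Slin((1-t)\bar c+tc)\bigr)_k,\qquad t\in[0,1].
$$
Then $F(0)=0$, and \eqref{eq:1storder} combined with the linearity of $\Slin$ gives
$$
F'(0)=\bigl(d\Ss|_{\bar c}(c-\bar c)\bigr)_k-\bigl(\Slin(c-\bar c)\bigr)_k=0.
$$

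Because $\Ss$ is slightly more than $C^1$, the function $F$ is $C^{1,1}$ on $[0,1]$ (or $C^{1,\alpha}$ for the appropriate H\"older exponent, which still suffices after an obvious modification of the exponent in the remainder estimate), and Taylor's theorem gives $|F(1)|\lesssim\max_{l\in N_k}\|c_l-c_m\|^2$. For $l,m\in N_k$ of uniformly bounded index separation one has $\|c_l-c_m\|\lesssim\|\Delta c\|$, whence $|(\Ss c-\Slin c)_k|\lesssim\|\Delta c\|^2$. Taking the supremum over $k$ yields the first inequality, and the same reasoning with $\D,\Dlin$ in place of $\Ss,\Slin$ yields the second.

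The main obstacle is to ensure that the implicit Taylor-remainder constant is uniform in $k$ and across all pyramids, since it depends on the $C^{1,1}$-norm of the coordinate representations of $\Ss$ and $\D$ on the region traversed by the data. Working within the fixed local chart of the preceding subsection, and under the boundedness built into the stability set-up, this region is compact, so the constant can indeed be chosen once and for all.
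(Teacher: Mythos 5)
Your argument is correct and is essentially the paper's own proof: a first-order Taylor expansion of $\Ss$ about a nearby constant sequence $e$, using $\Ss e=\Slin e=e$ and $d\Ss|_e=\Slin$ to kill the zeroth- and first-order terms, and locality to get $\|c-e\|\lesssim\|\Delta c\|$. Your function $F(t)$ along the segment from $\bar c$ to $c$ is just a more explicit way of producing the quadratic remainder the paper writes as $O(\|c-e\|^2)$.
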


 \begin{proof}
 We use a first order Taylor expansion of $\Ss$. For any constant
sequence $e$ we have $\Slin e=Se=e$, so 
	\begin{align*} 
		\Ss c 
	& 
		= \Ss e + d\Ss|_{e}(c-e) + O(\|c-e\|^2)
	\\&
		= e + \Slin (c-e) +O(\|c - e\|^2)
		= \Slin  c + O(\|c-e\|^2).
	\end{align*}
 Since $\Ss$ and $\Slin $ are local operators we may choose $e$ such that
	$$\|c-e\|\lesssim \|\Delta c\|. $$
 This proves the first equation. The proof of the second one is the same. 
\end{proof}

We now show that for all initial data $c\level j$ with exponential decay of
$\|\Delta c\level j\|$, the associated detail coefficients experience
 the same type of decay.

\begin{lemma}\label{lem:direct} Assume that 
 \eqref{eq:cdec} holds for $(c\level j)_{j\geq 0}$. Then
	\begin{equation}
	\label{eq:ddec}
	\|d\level j\|\lesssim \mu^j. 
	\end{equation}
 \end{lemma}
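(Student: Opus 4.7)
The plan is to use the proximity Lemma \ref{lem:prox} twice in order to replace the nonlinear composition $\Ss\D$ with its linear counterpart $\Slin\Dlin$ up to a quadratically small error, and then to exploit the fact that $\Slin\Dlin$ reproduces constants so that $\|c\level j-\Slin\Dlin c\level j\|$ is controlled by $\|\Delta c\level j\|$.

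First I would reduce the problem to bounding $\|c\level j\gminus \Ss\D c\level j\|$: by boundedness of $\Q$ we have
\[
\|d\level j\| = \|\Q(c\level j\gminus \Ss\D c\level j)\|\lesssim \|c\level j\gminus \Ss\D c\level j\|\lesssim \|c\level j - \Ss\D c\level j\|,
\]
using the Lipschitz estimate for $\gminus$ in the last step. Then I would insert the linear surrogate and split
\[
c\level j-\Ss\D c\level j = \bigl(c\level j-\Slin\Dlin c\level j\bigr)+\Slin\bigl(\Dlin c\level j-\D c\level j\bigr)+\bigl(\Slin\D c\level j-\Ss\D c\level j\bigr).
\]

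The three summands are then handled separately. The second term is bounded by $\|\Delta c\level j\|^2\lesssim \mu^{2j}$ by the proximity inequality for $\D$ together with the boundedness of the linear operator $\Slin$ on $\ell^\infty$. The third term is bounded by the proximity inequality for $\Ss$ applied to the sequence $\D c\level j = c\level{j-1}$, giving $\lesssim\|\Delta c\level{j-1}\|^2\lesssim\mu^{2(j-1)}$. For the first term I use that $\Slin\Dlin$ reproduces constants (this is where the assumption that $\Slin$ is convergent comes in, the Haar case requiring its own brief verification) and is a local, bounded linear operator; hence for any constant sequence $e$
\[
c\level j-\Slin\Dlin c\level j = (c\level j-e)-\Slin\Dlin(c\level j-e),
\]
and choosing $e$ locally optimal gives $\|c\level j-\Slin\Dlin c\level j\|\lesssim \|\Delta c\level j\|\lesssim \mu^j$. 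Collecting the three bounds one obtains $\|d\level j\|\lesssim \mu^j+\mu^{2j}+\mu^{2(j-1)}\lesssim \mu^j$, which is the claim.

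The main obstacle I anticipate is dealing with the Haar exception in the assumption on $d\Ss|_c$, where $\Slin = S_\delta$ does not itself reproduce constants; one has to verify directly in that case that $\Ss\D$ still reproduces constants (which it does, since $\Ss$ and $\D$ do individually by the normalization assumption) so that the argument for the first term goes through after replacing $\Slin\Dlin$ directly with $\Ss\D$ in the relevant step. Once this technicality is dispatched the remaining work is just collecting the estimates as above.
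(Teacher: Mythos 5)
Your argument is correct and follows essentially the same route as the paper: reduce to $\|c\level j-\Ss\D c\level j\|$ via boundedness of $\Q$ and the Lipschitz property of $\gminus$, split off the linear surrogate $\Slin\Dlin$ in three terms, bound two of them by $O(\mu^{2j})$ using Lemma \ref{lem:prox} for $\Ss$ and $\D$, and handle $\|c\level j-\Slin\Dlin c\level j\|$ by reproduction of constants plus locality. The paper does exactly this (writing $\Ss c\level{j-1}$ for your $\Ss\D c\level j$), so no further commentary is needed.
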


 \begin{proof} We use the boundedness of $\Q$ and Lemma \ref{lem:prox} to 
 estimate the norm of detail coefficients:
	\begin{align*} 
		\|d\level j\| 
	&=
			\|\Q c\level j \gminus \Ss c\level{j-1}\| 
		\lesssim \|c\level j\gminus \Ss c\level{j-1}\|
		\lesssim \|c\level j - \Ss c\level{j-1}\| 
	\\&
		\le	\|c\level j - \Slin  \D c\level{j}\|+\|\Slin  
		c\level{j-1} -\Ss c\level{j-1}\|
	\\ &	\lesssim 
		\|c\level j - \Slin  \Dlin  
		c\level j\| + \|\Slin  (\D c\level j - \Dlin c\level j)\| +\|\Slin  c\level{j-1} -\Ss 
		c\level{j-1}\| 
	\\&
		\lesssim
		\|c\level j - \Slin  \Dlin  c\level j\| + \mu^{2j}.
	\end{align*}
 It remains to estimate $\|c\level j - \Slin  \Dlin  c\level j\|$.
Reproduction of constants implies that
for any constant sequence $e$,
	$$
		\|c\level j - \Slin  \Dlin  c\level j\|
	=
		\|c\level j - e
		+ \Slin  \Dlin ( c\level j- e)\|
	\lesssim \|c\level j - e\|. 
	$$
 By the locality of $\Slin $ and $\Dlin $ we can pick $e$ such that 
$\|c\level j - e\|\lesssim \|\Delta c\level j\|$. This concludes the proof. 
\end{proof}

For later use we record the following two facts. The first one is a 
perturbation theorem which has been shown in \cite{wallner-2005-cca}.

\begin{theorem}\label{thm:proxcon} Assume that $\Slin $ is a convergent 
linear subdivision scheme and that $\Ss$ satisfies $d\Ss|_c = \Slin $ for all 
constant data $c$. Then there exists $\mu < 1$ such that
	\begin{equation}\label{eq:ncontr} \|\Delta \Ss^j c\| \lesssim 
	\mu^j \end{equation}
 for all initial data $c$ with $\|\Delta c\|$ small enough.
\end{theorem}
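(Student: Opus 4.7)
The argument is a perturbation of the classical linear theory: compare $\Ss$ with its linearization $\Slin$ on nearly constant data, and exploit the fact that a convergent linear subdivision scheme which reproduces constants contracts differences.

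First, I would record the relevant linear fact. Since $\Ss$ reproduces constants, $\Slin = d\Ss|_c$ also reproduces constants, and since $\Slin$ is convergent, a standard result from the linear theory of subdivision (a ``derived scheme'' $\Slin^*$ exists with $\Delta \Slin = \Slin^*\Delta$, and some iterate of $\Slin^*$ is a strict contraction on $\ell^\infty$) gives constants $N\in\mathbb N$ and $\gamma<1$ such that $\|\Delta \Slin^N c\|\le \gamma\|\Delta c\|$. Without loss of generality I will take $N=1$ by working with the $N$-fold composition (all subsequent estimates are uniform in the finite number of ``missing'' steps).

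Next, I would invoke the proximity inequality $\|\Ss c-\Slin c\|\lesssim \|\Delta c\|^2$ of Lemma~\ref{lem:prox}. Because the difference operator $\Delta$ is bounded on $\ell^\infty$, applying it yields
$$\|\Delta \Ss c-\Delta \Slin c\|\lesssim \|\Delta c\|^2.$$
Combined with the contractivity of $\Slin$ on differences, this gives the central recursion
$$\|\Delta \Ss c\|\le \gamma\|\Delta c\|+C\|\Delta c\|^2$$
for a fixed constant $C$.

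Then I would iterate. Fix any $\mu$ with $\gamma<\mu<1$ and require $\|\Delta c\|\le (\mu-\gamma)/C$. Setting $\delta_j:=\|\Delta \Ss^j c\|$, an induction gives $\delta_{j+1}\le(\gamma+C\delta_j)\delta_j$; as long as $\delta_j\le\delta_0$ this is bounded by $\mu\delta_j$, so $\delta_j\le \mu^j\delta_0$ for all $j\ge 0$ and in particular $\delta_j\lesssim \mu^j$, which is the claim.

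The main obstacle is the first step: establishing that a convergent linear scheme reproducing constants has a contractive derived scheme (after some number of iterations). This is a non-trivial but standard result of linear subdivision theory, and I would simply cite it (e.g.\ via \cite{wallner-2005-cca}); everything else is an elementary perturbation combined with a quadratic-term induction.
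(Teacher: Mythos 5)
The paper does not prove this theorem at all: it is imported verbatim as a known perturbation result from \cite{wallner-2005-cca}. Your argument is a correct reconstruction of exactly that proximity-based proof (contractivity of some iterate of the derived scheme for a convergent linear rule, the quadratic proximity bound of Lemma~\ref{lem:prox}, the recursion $\|\Delta\Ss c\|\le\gamma\|\Delta c\|+C\|\Delta c\|^2$, and the induction keeping $\gamma+C\delta_j\le\mu$), so it matches the intended route rather than offering a new one. The only step you compress is the reduction to $N=1$: when only $\Slin^N$ is contractive you must first establish a proximity inequality for $\Ss^N$ versus $\Slin^N$, which requires the preliminary non-contractive bound $\|\Delta\Ss c\|\lesssim\|\Delta c\|$ (valid for $\|\Delta c\|$ bounded) to control the intermediate differences $\|\Delta\Ss^i c\|$ in the telescoping sum; your remark about uniformity over the finitely many missing steps indicates you are aware of this, and it is handled in full in the cited reference.
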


We do not want to go into details concerning the precise
meaning of `small enough'. The reader who is 
interested in the considerable technical subtleties arising from this 
restriction and also the fact that $\Ss$ is usually not globally defined 
is referred to our previous work \cite{grohs-2008-sbg,grohs-2009-st,grohs-2009-wav} 
where these issues are rigorously taken into account and the appropriate
bounds for $\|\Delta c\|$ are derived.

The second result is also a perturbation result which has been shown in
\cite{grohs-2009-st}.

 \begin{lemma}\label{lem:perturb} 
 Let $A_i$, $U_i$ be operators on a normed vector space. Assume 
exponential decay $\|U_i\|\lessim \mu^i$, for some $\mu<1$.
Then uniform boundedness of
	$\|A_1\cdots A_k\|$ implies  uniform boundedness of
	$\|(A_1+U_1)\cdots(A_k+U_k)\|$.
\end{lemma}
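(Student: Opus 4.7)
The plan is to expand the perturbed product about the unperturbed one and control the resulting series by the exponential decay of the perturbations. Concretely, one distributes the product to obtain
$$
(A_1+U_1)\cdots(A_k+U_k) = \sum_{S\subseteq\{1,\ldots,k\}} T_S,
$$
where for $S=\{i_1<\cdots<i_m\}$ the term $T_S$ is the ordered product consisting of $A$-blocks $A_{i_\ell+1}\cdots A_{i_{\ell+1}-1}$ interleaved with the factors $U_{i_1},\ldots,U_{i_m}$. The empty subset recovers the unperturbed product $A_1\cdots A_k$, so the task is to estimate the sum of the remaining $2^k-1$ terms.

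Bounding each $A$-block by a common constant $M\ge 1$ supplied by the hypothesis, and using $\|U_{i_\ell}\|\le C\mu^{i_\ell}$, each term satisfies $\|T_S\|\le M^{m+1}C^m\mu^{i_1+\cdots+i_m}$. Summing over ordered tuples with the elementary estimate $\sum_{i_1<\cdots<i_m}\mu^{i_1+\cdots+i_m}\le\frac{1}{m!}\bigl(\mu/(1-\mu)\bigr)^m$, the full sum is bounded by
$$
M\sum_{m\ge 0}\frac{1}{m!}\left(\frac{MC\mu}{1-\mu}\right)^m = M\exp\!\left(\frac{MC\mu}{1-\mu}\right),
$$
which is finite and independent of $k$, giving the desired uniform bound.

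The main subtlety I anticipate is that the statement provides a bound only on the full product $A_1\cdots A_k$, whereas the argument above uses a uniform bound on every sub-block $A_{i_\ell+1}\cdots A_{i_{\ell+1}-1}$. In the intended application the $A_i$ arise as differentials of a fixed linear reconstruction step and are therefore individually uniformly bounded, so all sub-block bounds follow trivially. If one wants to derive the sub-block bound abstractly, one instead recurses via the identity
$$
\prod_{i=1}^k(A_i+U_i) = A_1\prod_{i=2}^k(A_i+U_i) + U_1\prod_{i=2}^k(A_i+U_i),
$$
which gives a discrete Gronwall inequality for $\sigma_k=\sup_m\|\prod_{i=m}^k(A_i+U_i)\|$; once $m$ is large enough for the geometric tail $MC\mu^m/(1-\mu)$ to fall below $\tfrac12$ the recursion closes, and the finitely many remaining initial factors contribute only a bounded multiplicative constant. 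Either way the perturbed product stays uniformly bounded, as claimed.
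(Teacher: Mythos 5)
The paper does not actually prove Lemma \ref{lem:perturb}; it only cites \cite{grohs-2009-st}, so there is no in-paper argument to compare against and your proposal must stand on its own. Your expansion of the perturbed product over subsets $S$, with each term bounded by $M^{m+1}C^m\mu^{i_1+\cdots+i_m}$ and the sum over increasing tuples controlled by $\frac{1}{m!}\bigl(\mu/(1-\mu)\bigr)^m$, is the standard argument and is correct; one could equally well bound the whole sum by $M\prod_{i\ge 1}(1+MC\mu^i)$, which converges for the same reason. You are also right to flag the real issue: the argument needs a uniform bound on every consecutive block $A_{i_\ell+1}\cdots A_{i_{\ell+1}-1}$, not just on the products starting at index $1$. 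In the paper's application this is automatic, since all $A_i$ equal the fixed operator $\Slin$ and $\sup_j\|\Slin^j\|\le M$ for a convergent scheme, so your first argument completely covers the case that is actually used.

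Your proposed ``abstract'' repair via a discrete Gronwall recursion, however, cannot work, because the lemma as literally stated is false: take $A_1=0$, $A_i=2\,\id$ for $i\ge 2$, and $U_i=\mu^i\,\id$. Then $A_1\cdots A_k=0$ for all $k$, yet $(A_1+U_1)\cdots(A_k+U_k)=\mu\prod_{i=2}^k(2+\mu^i)\ge\mu\,2^{k-1}$ is unbounded. The recursion you write down still requires control of the tail products $\prod_{i=m}^k A_i$, which the stated hypothesis does not supply, so it does not close. The correct reading of the hypothesis is uniform boundedness of all partial products $A_m\cdots A_k$ with $1\le m\le k$ (trivially satisfied for $A_i\equiv\Slin$), and under that reading your subset-expansion proof is complete.
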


\nix{Next we show that the decay property $\|\Delta c_j\| \lesssim \mu^j$ still 
holds for perturbed data $\wt c_j = P_j(\wt c_0 , \wt d_1,\dots , 
\wt d_j)$, provided that $\|\wt c_0 - c_0\|, \|\wt d_1 - 
d_1\|,\dots ,\|\wt d_j - d_j\|$ are small.}

We continue with the proof of Theorem \ref{thm:stability} by showing that
the decay property \eqref{eq:cdec} we assumed for the data pyramid
$c\level j$ also holds for the perturbed data pyramid $\wt c\level j$.

\begin{lemma}\label{lem:lip}
 Under the assumptions of Theorem \ref{thm:stability}, further assume
that $\Slin $ is a convergent subdivision scheme. Then there exist constants
$s_1$, $s_2$ such that for all $j$, and any choice of data $\wt c\level j$
we have
	\begin{equation}
	\|\Delta \wt c\level 0\|\le s_1 ,\quad
	\|\wt d\level k \|\le s_2\mu^{k}\ \mbox{for all}\ k
	\quad \implies \quad
	\|\Delta \wt c\level j\|\lesssim (\mu+\eps)^j. 
	\end{equation}
 Here for each $\eps >0$ the implied constant is uniform.
 \end{lemma}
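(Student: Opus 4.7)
The plan is to induct on $j$: the base case $j=0$ is immediate from $\|\Delta \wt c\level 0\| \le s_1$, and for the inductive step I propagate the exponential-decay bound through one application of the reconstruction map $\wt c\level j = \Ss \wt c\level{j-1} \gplus \Rr \wt d\level j$. First I would apply $\Delta$ and use Lipschitz continuity of $\gplus$, boundedness $\|\Rr w\| \lesssim \|w\|$, and the hypothesis $\|\wt d\level j\| \le s_2 \mu^j$ to reduce to $\|\Delta \wt c\level j\| \lesssim \|\Delta \Ss \wt c\level{j-1}\| + \mu^j$. The proximity inequality (Lemma \ref{lem:prox}), combined with the trivial bound $\|\Delta f\| \le 2\|f\|$, then replaces $\Ss$ by $\Slin$ at the cost of a quadratic error, yielding
\begin{equation*}
\|\Delta \Ss \wt c\level{j-1}\| \lesssim \|\Slin_1 \Delta \wt c\level{j-1}\| + \|\Delta \wt c\level{j-1}\|^2,
\end{equation*}
where $\Slin_1$ denotes the derived scheme characterised by $\Delta \Slin = \Slin_1 \Delta$ (available because $\Slin$ reproduces constants).

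Next I would unfold this recurrence back to level $0$ and invoke the operator bound $\|\Slin_1^m\| \lesssim \mu^m$, which is equivalent to convergence of $\Slin$, obtaining
\begin{equation*}
\|\Delta \wt c\level j\| \lesssim \mu^j \|\Delta \wt c\level 0\| + \sum_{k=1}^j \mu^{j-k}\bigl(\|\Delta \wt c\level{k-1}\|^2 + \mu^k\bigr).
\end{equation*}
The purely linear contribution is at most $s_1 \mu^j + Cj\mu^j$, which is dominated by $(\mu+\eps)^j$ for every $\eps > 0$ at the cost of a constant depending on $\eps$; this is exactly where the slack $\eps$ enters. The quadratic tail is handled by substituting the inductive bound $\|\Delta \wt c\level{k-1}\| \le K_\eps (\mu+\eps)^{k-1}$; evaluating the resulting geometric sum gives a term of order $K_\eps^2 (\mu+\eps)^{2j-2}$ or better, which is absorbed into $\tfrac{1}{2} K_\eps (\mu+\eps)^j$ provided $K_\eps$ is small enough, which in turn is achieved by taking the thresholds $s_1, s_2$ sufficiently small.

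The hardest part will be closing the induction self-consistently. Since $\|\Slin_1\|$ itself need not be less than $1$ (only $\|\Slin_1^m\|^{1/m} \to \mu < 1$), there is no single-step contraction to induct on directly, and the quadratic feedback from the proximity term forces me to track the inductive constant $K_\eps$ and the smallness thresholds $s_1, s_2$ simultaneously. An equivalent way to organise the argument is to view the linearisation of one reconstruction step as $\Slin_1 + U_k$ with $\|U_k\|$ exponentially small and to invoke Lemma \ref{lem:perturb}; this shifts the difficulty to estimating $\prod(\Slin_1 + U_k)$ but does not remove it. The $\eps$-slack itself cannot be dispensed with, because the arithmetic sum $\sum_k \mu^{j-k}\mu^k = j\mu^j$ contributes an unavoidable polynomial-in-$j$ factor that has to be traded against a geometric factor.
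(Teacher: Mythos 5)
Your argument is correct, but it reaches the key contraction of $\Ss$ by a genuinely more self-contained route than the paper. The paper's proof simply \emph{assumes} the single-step estimate $\|\Delta\Ss c\|\le\mu\|\Delta c\|$ for all data encountered, justifying it by the perturbation result of Theorem \ref{thm:proxcon} (passing to an iterate $\Ss^N$ if necessary), and then runs the elementary scalar iteration $\|\Delta\wt c\level n\|\le s_1\mu^n+2nrr's_2\mu^n\lesssim(\mu+\eps)^n$ --- exactly the $\sum_k\mu^{j-k}\mu^k=j\mu^j$ trade that you identify as the source of the $\eps$-slack. You instead re-derive the contraction from the proximity inequality of Lemma \ref{lem:prox}: you write $\Delta\Ss\wt c\level{j-1}$ as the derived scheme ${\Slin}_1$ applied to $\Delta\wt c\level{j-1}$ plus a quadratic error, unfold the resulting affine recursion at the level of sequences (which is essential, since only $\|{\Slin}_1^m\|\lesssim\mu^m$ and not $\|{\Slin}_1\|<1$ is available --- you correctly flag this), and close a bootstrap on the inductive constant $K_\eps$. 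In effect you have inlined a proof of Theorem \ref{thm:proxcon} and merged it with the detail-perturbation estimate; this costs you the quadratic-feedback bookkeeping but makes completely explicit where the smallness of $s_1,s_2$ enters, which the paper leaves implicit. Two small caveats. First, the rate $\mu$ for which $\|{\Slin}_1^m\|\lesssim\mu^m$ need not coincide with the $\mu$ of hypothesis \eqref{eq:cdec}; the paper resolves this by enlarging $\mu$, and you should say the same (or absorb the discrepancy into $\eps$). Second, the smallness of $s_1,s_2$ is needed not only to close your bootstrap but also to keep every $\wt c\level i$ inside the set $P_{M,\delta}$ on which $\Ss$ is defined and on which Lemma \ref{lem:prox} holds with uniform constants; your argument tacitly uses those uniform constants at every level, so this domain condition should be stated, even if (as in the paper's own sketch) its verification is deferred to the earlier literature.
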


	\begin{proof} (Sketch)
 We make the simplifying assumption that for all initial data $c$ which
occur in the course of the proof we have
	 \begin{equation}\label{eq:simpdec} 
	\|\Delta \Ss c\| \le \mu\|\Delta c\|. 
	\end{equation}
 This is no big restriction as it can be shown that such an equation 
always holds for some iterate $\Ss^N$ of $\Ss$ and initial data
with $\|\Delta c\|$ small enough, provided $\Slin $ is 
convergent \cite{wallner-2005-cca}. In case that only 
	$$
	\|\Delta \Ss c\| \le \bar\mu\|\Delta c\|
	$$
 for some $\bar\mu\in(\mu,1)$ we make the 
initial $\mu$ larger. This does not change the substance of Theorem
\ref{thm:stability}. With the Lipschitz constants $r,r'$ defined by
	$ \|\Rr c\| \le r \|c \|$,
	$ \|a\gplus b-a\| \le r' \|a\gplus b\|$
we now estimate:
	\begin{align*}
		\|\Delta \wt c\level 1\|
	& \le
		\|\Delta \Ss \wt c\level 0\|
		+2\|(\Ss \wt c\level 0 \gplus \Rr \wt d\level 1 )
		-\Ss \wt c\level 0 \|
	\\ 
	&\le
		\mu\| \Delta \wt c\level 0\| 
		+ 2 r' \|\Rr \wt d_1\|
	\le
		\mu s_1 + 2 rr' s_2 \mu
	\end{align*}
Iteration of this argument gives the inequality
	$$
	\|\Delta \wt c\level n\|
	\le
	 s_1\mu^n + 2n rr' s_2 \mu^n
	\lesssim (\mu+\eps)^n $$
 for all $\eps >0$, which we wanted to show. In case \eqref{eq:simpdec}
does not hold for $\Ss$, but only for an iterate $\Ss^N$, a similar
argument is required which we would like to skip.
The reason for requiring $s_1,s_2$ to be `small enough' is
that \eqref{eq:simpdec} usually only holds for data $c$ in 
some set
	$$P_{M,\delta}:=\{c\mid c\idx k \in M\ \forall k,\mbox{ and }
	\|\Delta c\|<\delta \}.
	$$
 In general we need to ensure that all $c\level i$'s lie in the set
$P_{M,\delta}$ if the only information on the data is the size of
detail coefficients. This 
rather technical step is where we the restrictions on the constants $s_1,s_2$ 
come in. We chose to skip the technical details regarding this issue, 
since we do not find them particularly enlightening and they have already 
been treated in full detail in previous work
\cite{grohs-2009-st,grohs-2009-wav,grohs-2008-sbg}.
 \end{proof}

We are finally in a position to prove Theorem \ref{thm:stability}. 

\begin{proof}[Proof (of Theorem \ref{thm:stability})] 
The mapping which computes data $c\level k$ at level $k$ by
way of reconstruction is denoted by $P_k$. We use the following
notation and definition:
	\begin{align}
	\data_j & := (c\level 0, d\level 1,\dots , d\level k)
		\in \ell(V)\times\ell(W)^k
	\\
	\label{eq:iter}
		P_k(\data_k) 
	&:= 
		\Ss P_{k-1}(\data_{k-1}) \gplus \Rr d\level k,
		\quad
		P_0=\id.
	\end{align}
 We first treat the case that $\Slin $ is a convergent subdivision scheme and
later deal with the Haar case. 

Observe that we can without loss of generality
assume that both $\|\Delta c\level 0\|$ and the implied constant in 
\eqref{eq:ddec} are arbitrarily small. 
This is because we can simply do a re\dash indexing $(c')\level i=c\level{i+j_0}$
and we assumed exponential decay of $\Delta c\level j$.
in particular,
	$$
	\|\Delta c\level 0\| \le f_1<s_1, \quad 
	\|d\level k\|\le f_2\mu^k, \quad f_2<s_2,
	$$
  with the constants $s_1,s_2$ from Lemma \ref{lem:lip}. 
By Lemma \ref{lem:direct},
$\|d\level j\|$ is likewise of exponential decay. By the same argument
we can make the implied constant arbitrarily small.

Pick the constants $E_1, E_2$ such that
$f_1 + E_1 \le s_1$ and $f_2 + E_2 \le s_2$, and consider
coarse data $\wt c\level 0$ and detail coefficients
$\wt d\level 1,\dots,\wt d\level j$ which obey the
assumption \eqref{eq:close} made in the statement of the theorem. 
Lemma \ref{lem:lip} implies that we have exponential decay of 
$\|\Delta \wt c\level j\|$. 

The estimates gathered so far enable us to show 
that there exists a constant $C$ such that for all $j$, $k$ and all 
perturbed arguments
	$$
	\wt\data_j 
	= (\wt c\level 0, \wt d\level 1 , \dots , \wt d\level j),
	$$
we have the bound
	\begin{equation}
	\label{eq:diffbound}
	\Big\|\frac{\partial}{\partial d\level k}
		\Big|_{\wt\data_j} P_j\Big\|,  \quad
	\Big\|\frac{\partial}{\partial c\level 0}
		\Big|_{\wt\data_j} P_j\Big\| \le C.
	\end{equation}
 Indeed, using the chain rule on the recursive definition \eqref{eq:iter},
we see that
	 \begin{equation}\label{eq:norm}
	\frac{\partial}{\partial c\level 0}
		\Big|_{\wt\data_j} P_j
	= 
		\Big({d_1\oplus}
			\big|_{(\Ss P_{j-1},\Rr d\level j)}\Big)
		\Big(d\Ss
			\big|_{\wt c\level{j-1}}\Big)
	\Big(\frac{\partial}{\partial c\level 0}
		\Big|_{\wt\data_{j-1}} P_{j-1}\Big).
	\end{equation}
 Our assumptions on smoothness (here: $\gplus$ is $C^2$) and
the compatibility relation \eqref{eq:compatibility} together imply that
	 \begin{align*}
		d_1\mathord\oplus\big|_{(\Ss \wt c\level {j-1},\Rr d\level j)}
	&=
		d_1\mathord\oplus \big|_{(\Ss \wt c\level{j-1},0)}
	+	\Big(
		d_1\mathord\oplus \big|_{(\Ss \wt c\level{j-1},\Rr d\level j)}
	-	d_1\mathord\oplus \big|_{(\Ss \wt c\level{j-1},0)}
		\Big) = I + V_j
	\end{align*}
 with $\|V_j\|\lesssim \|\Rr d\level j\|\lesssim \mu^j$.
In order to estimate the term $d\Ss\big|_{\wt c\level{j-1}}$,
we note that $d\Ss = \Slin $ implies that
$\|d\Ss\big|_c-\Slin \|\lesssim \|\Delta c\|$ for all initial data $c$,
see \cite{grohs-2009-st}. Hence we can write 
	$$d\Ss\big|_{\wt c\level{j-1}}=\Slin  + W_j,
	\quad\mbox{where}\ \|W_j\|\lesssim \|\wt c\level{j-1}\|
	\lesssim (\mu+\eps)^{j}$$
 for any $\eps >0$. It is 
a well known fact that for a convergent subdivision scheme $\Slin $, there 
is a constant $M$ with $\sup_j \|\Slin ^j\|\le M$. The previous 
discussion and iterative application of \eqref{eq:norm} implies 
	$$
	\frac{\partial}{\partial c\level 0}
		\Big|_{\wt\data_j}
		P_j =(\Slin  + U_1)\cdots(\Slin+U_j), \quad
	\mbox{where}\
 	\|U_k\|\lesssim (\mu+\eps)^k.
	 $$
 Now we invoke Lemma \ref{lem:perturb} and see that indeed the partial
derivatives of $P_k$ with respect to $\wt c\level 0$
at $\wt\data_j$ are uniformly bounded, independent of $j$.  The
derivatives with respect to $\wt d^k$ can be handled in an analogous
manner. This shows \eqref{eq:diffbound}, from which it is easy to
see \eqref{eq:stabestimate}. 

Having concluded the proof in the case that $\Slin$ is a convergent
subdivision scheme, we turn to the Haar case. It is analogous, but 
because we have $\Ss=\Slin$ we do not need the perturbation inequalities
at all to estimate differentials (in particular we do not need
Lemma \ref{lem:lip}). 
 \end{proof}

\begin{remark} The only place where the constants $E_1,E_2$ come into 
play is the assumption \eqref{eq:simpdec} which is usually only satisfied 
for data in some set $P_{M,\delta}$ -- see the discussion in the proof
of Lemma \ref{lem:lip}. It is easy to see that if $\Ss$ is 
defined and contractive for {\em all} initial data, then the
constants $E_1$, $E_2$ can be arbitrarily large.
 \end{remark}

\section{Obtaining discrete data} 
\subsection{Convolution and smoothing of manifold\dash valued data}

Here we are going to investigate further
properties of the geometric average which was defined by Equations
\eqref{eq:def:average0} and \eqref{eq:def:average1}. They 
will become important in Section \ref{sec:last}. This material 
is already contained in Karcher's paper \cite{karcher-1977-cm} as far as 
surfaces and Riemannian geometry are concerned. Here we also show the
extension to Lie groups, which is not difficult once the Riemannian
case is known.

Convolution with a function $\psi$ with $\int\psi=1$ can be interpreted
as an average. This applies to multivariate functions as well as to 
univariate ones, which are our main concern.
In order to fit the previous definitions, we give
an equivalent construction of the convolution $g\mathbin *\psi$ for vector\dash valued
functions $g$, and at the same time a definition of
$(f\gstar \psi)(u)$ for manifold\dash valued
functions $f:\R^d\to M$. 
	\begin{align}
	&\textstyle 
		m
	=
		(g\mathbin*\psi)(u) \iff m=\int_{\R^d} g(x)\psi(u-x)\, dx 
	\iff
		\int_{\R^d} (g(x)-m)\psi(u-x)\, dx = 0,
	\\&\textstyle 
		m
	=	(f\gstar \psi)(u) 
	\iff
		\int_{\R^d} (f \gminus m) \psi(u-x) dx =0.
	\end{align}
 The even
more general case where the domain of functions are manifolds has been
discussed in \cite{karcher-1977-cm}. It turns out that basically 
any nonnegative kernel function $\psi$ supported in the cube $[-1,1]^d$
can be used for smoothing in the following way: For each $\rho>0$,
we let
	\begin{equation}
	f^{\rho}  =  f \gstar \psi^{\rho}, 
	\quad
	\mbox{where} \
	\psi^{\rho}(x) = {1\over\rho^d} \psi\Big({x\over \rho}\Big).
	\end{equation}
 We want to show that $f$ and its differential $df$ are approximated
by $f^{\rho}$ and $df^{\rho}$ as  $\rho$ approaches zero.
The proofs consist of revisiting the proofs given in \cite{karcher-1977-cm}
which apply to the Riemannian case.

 \begin{theorem} \label{thm:conv}
 Consider the smoothed functions $f^\rho$ defined by a function
$f:\R^d\to M$ and a kernel $\psi$ as above. Then 
	$$
	\lim\nolimits_{\rho\to 0} f^\rho = f, \quad
	\lim\nolimits_{\rho\to 0} df^{\rho} = df.
	$$
 In case $f$ is Lipschitz differentiable, then this convergence
is linear.
\end{theorem}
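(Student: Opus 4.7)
The plan is to treat the defining relation for $f^\rho(u)$, namely $F(u,m) := \int_{\R^d} \psi^\rho(u-x)(f(x)\gminus m)\, dx = 0$, as an implicit equation in $m$ and apply the implicit function theorem. Write $\partial_1\gminus(p,q)$ and $\partial_2\gminus(p,q)$ for the partial derivatives of $(p,q)\mapsto p\gminus q$. Then $\partial_m F = \int \psi^\rho(u-x)\,\partial_2\gminus(f(x),m)\, dx$, and the diagonal identity $\partial_2\gminus(p,p) = -\id$, together with $\int\psi^\rho = 1$, shows that $\partial_m F$ is close to $-\id$, hence invertible, whenever $\rho$ is small and $m$ is close to $f(u)$. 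In the Riemannian case this well\dash posedness of $f^\rho(u)$ is exactly what Karcher \cite{karcher-1977-cm} establishes for the center\dash of\dash mass construction; for Lie groups it follows by an elementary computation with $\exp$ and $\log$.

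For $f^\rho(u) \to f(u)$, I use that the support of $x\mapsto \psi^\rho(u-x)$ shrinks to $\{u\}$, so by continuity of $f$ the quantity $f(x)\gminus f(u)$ is uniformly small on this support; the implicit function theorem then forces the solution $m=f^\rho(u)$ to stay close to $f(u)$ with a modulus inherited from that of $f$. For the differential, I differentiate $F(u,f^\rho(u)) = 0$ in $u$, integrating by parts to move the derivative off $\psi^\rho$ and using that the differential of $x\mapsto f(x)\gminus m$ is $\partial_1\gminus(f(x),m)\, df(x)$; this yields $\bigl(\int \psi^\rho(u-x)\,\partial_2\gminus(f(x),f^\rho(u))\, dx\bigr) df^\rho(u) = -\int \psi^\rho(u-x)\,\partial_1\gminus(f(x),f^\rho(u))\, df(x)\, dx$. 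Letting $\rho\to 0$ concentrates both sides at $x=u$, and the diagonal identities $\partial_1\gminus(p,p)=\id$, $\partial_2\gminus(p,p)=-\id$ give $df^\rho(u) \to df(u)$.

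For the linear rate under Lipschitz differentiability of $f$, I would pass to a normal chart centered at $f(u)$, expand $f(x) = f(u) + df(u)(x-u) + O(|x-u|^2)$, and substitute into the implicit equation together with a first\dash order Taylor expansion of $\partial_1\gminus, \partial_2\gminus$ around the diagonal at $f(u)$. The linear part of $f$ integrates against $\psi^\rho$ to something of size $O(\rho)$, while the quadratic remainder contributes $O(\rho^2)$, so solving for $m$ gives $f^\rho(u)\gminus f(u) = O(\rho)$; an analogous expansion of the differentiated equation yields $df^\rho(u) - df(u) = O(\rho)$. The main technical obstacle I anticipate is that $\partial_1\gminus, \partial_2\gminus$ take values in the abstract tangent spaces $T_{p\gminus q}(TM)$ in the Riemannian case, but as observed in the derivative discussion following Prop.\ \ref{prop:necessary}, only compositions of the form $(\partial_2\gminus)^{-1}\partial_1\gminus$ actually appear, which live in an intrinsic target and reduce the estimates to the familiar Riemannian picture carried out in detail in \cite{karcher-1977-cm}.
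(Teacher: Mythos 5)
Your proposal is correct and follows essentially the same route as the paper: both treat $\int\psi^\rho(u-x)(f(x)\gminus m)\,dx=0$ as an implicit equation, differentiate it in $u$ (your integration by parts is the paper's change of variables moving the derivative from $\psi^\rho$ onto $f$), and pass to the limit using the near-diagonal behaviour of the partial derivatives of $\gminus$ — your identities $\partial_1\gminus(p,p)=\id$, $\partial_2\gminus(p,p)=-\id$ are exactly the paper's $F_{p,q}(v)=\Pt_p^q(v)+R'$, $E_{p,q}(v)=v+R$ with curvature-controlled remainders quoted from Karcher (and the BCH expansion in the Lie-group case). The only difference is cosmetic: the paper carries the quantitative remainder bounds explicitly to get the linear rate, where you invoke a Taylor expansion in a normal chart.
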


\begin{proof} We skip convergence of $f^\rho$ and show only convergence
of $df^\rho$. The proof is in the spirit of Lemma 4.2 and Theorem 4.4
of \cite{karcher-1977-cm}, the difference being that the domain of $f$ is a
vector space. We define $V:\R^d\times M\to \R^{\dim M}$ by letting 
	$$
	\textstyle
	V(u,p) := \int (f(x)\gminus p) \psi^{\rho}(u-x)  dx.
	$$
 By definition, $V(u,f^\rho(u))=0$. This implies the following
equation of derivatives:
	\begin{equation}
	\label{sumderiv}
	d_1 V_{u,f^{\rho}(u)} 
	+ D_2 V_{u,f^{\rho}(u)} \circ df^{\rho}_u  = 0.
	\end{equation}
 The capital $D$ indicates the fact that in the Riemannian case we
employ a covariant derivative. The partial derivatives of $V$ have the form
	\begin{align*}
		d_1\big|_{u,p} V(\dot u)
	&=
		\ddt \textstyle
		\int
			\big(f(y) \gminus p\big) 	
			\psi^{\rho}(u(t)-y)
			 dy
	\\
	&=
		\ddt \textstyle
		\int
			\big(f(x-u+u(t)) \gminus p\big)
                        \psi^{\rho}(u-x)
                         dx
		,
	\\
		D_2\big|_{u,p} V(\dot p) 
	& = 
		\Ddt\textstyle
		\int
			\big(f(x) \gminus p(t)\big) 	
			\psi^{\rho}(u-x)
			 dx
	\end{align*}
 Using the functions $E_{p,q}(\dot q) = -{D\over dt} (p\gminus q(t))$ and 
$F_{p,q}(\dot p) = {d\over dt} (p(t)\gminus q)$, we get
	\begin{equation}
	\label{eq:Vexplizit}
		d_1\big|_{u,p} V(\dot u)
		+ D_2\big|_{u,p} V(\dot p)
	= \int 
		\big(
			F_{f(x),p}(df_x(\dot u))
			-E_{f(x),p}(\dot p)
		\big)
		\psi^{\rho}(u-x)
		 dx.
	\end{equation}
 It is shown in \cite{karcher-1977-cm} that in the Riemannian case the
functions $E_{p,q}$ and $F_{p,q}$ can be bounded in terms of
sectional curvature $K$, and the parallel transport operator
$\Pt_{\text{\sl from}}^{\text{\sl to}}$:
	$$
		E_{p,q}(v) = v + R,
	\quad
		F_{p,q}(v) = \Pt_p^q(v) + R',
	$$
 where $\|R\| \le   \|v\| \const(\min K,\max K) \cdot \dist(p,q)^2$
and  $\|R'\| \le \|F_{p,q}(v)\|\const(\max|K|)\cdot\dist(p,q)^2$.
 Letting $p=f^{\rho}(u)$ and $\dot p = df^{\rho}(\dot u)$, we convert
\eqref{sumderiv} and \eqref{eq:Vexplizit} into the integral
	\begin{align*}
	0 &
	= 
		\int
		\big(
			\Pt_{f(x)}^{f^{\rho}(u)} df_x(\dot u)
			+ R'(x)
			-df^{\rho}(\dot u) 
			- R(x)
		\big)
		\psi^{\rho}(u-x)
		 dx,
	\end{align*}
 without indicating the dependence of the remainder terms $R$, $R'$ on $x$.
 The assumption that $f$ is $C^1$ implies that for all $x$ with 
contribute to the integral (i.e., $\psi^{\rho}(u-x)\ne 0$), we have
	$ x\to u$,
	$df_x(\dot u)\to df_u(\dot u)$,
	$f^{\rho}(x) \to f(u)$, 
	$\Pt \to \id$, 
	$R\to 0$,
	$R'\to 0$.
Observe that all these limits have at least linear convergence rate,
provided $df$ is Lipschitz. With $\int\psi=1$, we obtain
	$$
	\lim\nolimits_{\rho\to 0} (df_x+df^{\rho})(\dot u)\to 0,
	$$
 where the limit is linear if $df$ is Lipschitz. This concludes the proof
in the Riemannian case. 

In the Lie group case, it is not difficult 
the compute the derivatives $E_{p,q}(\dot p)$ and $F_{p,q}(\dot q)$ 
by means of the Baker\dash Campbell\dash Hausdorff formula which says
	$
	\log(e^x e^y) = x + y + {1\over 2}[x,y] +  \cdots,
	$
where the dots indicate terms of third and higher order expressible
by Lie brackets. When $p$ and $q=pe^z$ undergo 1\dash parameter variations
of the form $p(t)=p e^{tw}$ and $q(t)=qe^{tw}$ with $w\in\Lie g$, then 
	\begin{align*}
		p(t)\gminus q 
	& =
		\log(e^z e^{tw}) = z + tw + {1\over 2}[z,tw] + \dots
	\\
	 	p\gminus q(t) 
	& =
		\log(e^{-tw} e^z) = -tw+z+{1\over 2}[-tw,z]+\dots
	\end{align*}
This implies
	\begin{align*}
		F_{p,q}(w) &=w+{1\over 2}[z,w] + \cdots 
	,\\
		E_{p,q}(w) &= w + {1\over 2}[w,z] + \cdots
	\end{align*}
 Similar to the Riemannian case above, we convert
 \eqref{sumderiv} and \eqref{eq:Vexplizit} into the integral
	\begin{align*}
	& \int 
		\Big(
			df_x(\dot u) 
	+ 	{1\over 2}\Big[f^\rho(u)\gminus f(x), df_x(\dot u)
		+df^\rho(u)\Big]
	-	df^\rho(u)
	+	\cdots
		\Big)
		\psi^{\rho}(u-x)
		\, dx
	=0
	\end{align*}
 in the Lie algebra.
 The same arguments imply $x\to u$, $f^\rho(x)\to f(u)$,
$df_x(\dot u)\to df_u(\dot u)$, and as a consequence
$df^\rho\to df$ as $\rho\to 0$.
This concludes the proof of Theorem \ref{thm:conv} in the Lie group case.
 \end{proof}

\subsection{The passage from continuous to discrete data}
\label{sec:last}

In the analysis of multiscale decompositions one frequently assumes
an infinite detail pyramid. In practice a vector\dash valued or
manifold\dash valued function $f(t)$ which depends on a parameter $t\in\R$
is given be finitely  many measurements. Such measurements might
be samples at parameters $t_i = ih$, for some small $h$; or measurements
might be modeled as averages
of the form $f\gstar \phi(\cdot\, -ih)_{i\in\Z} $ where
$\phi$ is some kernel with $\int\phi=1$ and $\supp(\phi)$ small (in fact
physics excludes the kind of measurement we called {\em samples} and
permits only $\phi$ to approach the Dirac delta).

In the linear case any multiscale decomposition based on 
midpoint\dash interpolation and especially the Haar scheme are well adapted
to deal with averages: The decimation operator $D$ in this case
is consistent with the definition of discrete data as follows:
	\begin{align*}
	\psi\level j & = 2^j 1_{[0,1]} (2^j\, \cdot\,) = 2^j 1_{[0,2^{-j}]}
	,\quad
		f\level j = f\mathbin *\psi\level j,
	\quad
		c\level j = f\level j\big|_{2^{-j}\Z}
	\\ \implies
	c\level {j-1} & = D c\level j.
	\end{align*}
 We have no analogous relation for manifold\dash valued multiscale
decompositions. Nevertheless we may let
	$$
	f\level j = f\gstar \psi\level j, \quad
	c\level j = f\level j\big|_{2^{-j}\Z}.
	$$
 In view of Theorem \ref{thm:conv}, this yields discrete data whose
discrete derivatives $\Delta c\level j$ approximate 
the derivatives of $f$. Assuming $f$ to be $C^2$, we have
	\begin{align*}
		\Delta c\level j\idx k 
	&:=
		2^j\big (c\level j\idx {k+1}  - c\level j\idx k  \big)
	\implies
	\Delta c\level j\idx k
	= 
		 {d\over dt} f\level j \big|_{k2^{-j}}
		+O(2^{-j})
	= {d\over dt} f\big|_{k2^{-j}} + O(2^{-j}).
	\end{align*}
 The previous equation is to be interpreted in any
smooth coordinate chart of the manifold under consideration.

\bigskip\centerline{\sc Acknowledgments}\bigskip

The authors gratefully acknowledge the support of the Austrian Science 
Fund. The work of Philipp Grohs has been supported by
grant No.\ P19780.

  \def \http#1{{\it\spaceskip 0 pt plus 0.5pt http:/$\!$/ #1}}
\providecommand{\bysame}{\leavevmode\hbox to3em{\hrulefill}\thinspace}
\providecommand{\MR}{\relax\ifhmode\unskip\space\fi MR }
\providecommand{\MRhref}[2]{%
  \href{http://www.ams.org/mathscinet-getitem?mr=#1}{#2}
}
\providecommand{\href}[2]{#2}

\end{document}